\documentclass[]{interact}

\usepackage{epstopdf}
\usepackage[caption=false]{subfig}

\usepackage[numbers,sort&compress]{natbib}
\bibpunct[, ]{[}{]}{,}{n}{,}{,}
\renewcommand\bibfont{\fontsize{10}{12}\selectfont}

\theoremstyle{plain}
\newtheorem{theorem}{Theorem}[section]
\newtheorem{lemma}[theorem]{Lemma}

\newtheorem{proposition}[theorem]{Proposition}

\theoremstyle{definition}

\theoremstyle{remark}
\newtheorem{remark}{Remark}

\newtheorem{assumption}{Assumption}

\DeclareMathOperator{\proj}{proj}
\DeclareMathOperator{\grad}{grad}
\DeclareMathOperator{\R}{R}
\DeclareMathOperator{\T}{T}

\usepackage{comment}
\usepackage[ruled,linesnumbered,vlined,algo2e]{algorithm2e}
\usepackage{url}
\usepackage{multirow}
\usepackage{color}

\begin{document}

\articletype{ARTICLE}

\title{Riemannian Gradient Method with Momentum}

\author{
\name{Filippo Leggio\textsuperscript{a} and Diego Scuppa\textsuperscript{a}\thanks{CONTACT Diego Scuppa. Email: diego.scuppa@uniroma1.it}}
\affil{\textsuperscript{a}Department of Computer, Control and Management Engineering, Sapienza University of Rome, Rome, Italy}
}

\maketitle

\begin{abstract}
In this paper, we consider the problem of minimizing a smooth function on a Riemannian manifold and present a Riemannian gradient method with momentum. The proposed algorithm represents a substantial and nontrivial extension of a recently introduced method for unconstrained optimization. We prove that the algorithm, supported by a safeguarding rule, produces an $\epsilon$-stationary point with a worst-case complexity bound of $\mathcal{O}(\epsilon^{-2})$.
Extensive computational experiments on benchmark problems are carried out, comparing the proposed method with state-of-the-art solvers available in the Manopt package. The results demonstrate competitive and often superior performance.
Overall, the numerical evidence confirms the effectiveness and robustness of the proposed approach, which provides a meaningful extension of the recently introduced momentum-based method to Riemannian optimization.

\end{abstract}

\begin{keywords}
Riemannian manifolds; Riemannian optimization; Nonconvex optimization; Nonlinear optimization; Momentum; Global convergence; Complexity
\end{keywords}

\begin{amscode}
	90C26; 90C30; 90C53; 90C60
\end{amscode}

\section{Introduction}

In this work, we focus on the problem
\begin{equation*}
\min\{f(x) : x \in \mathcal{M}\},
\end{equation*}
where $\mathcal M$ is a \textit{Riemannian submanifold} of a finite-dimensional Euclidean space $\mathcal E$ \cite{RiemannOptimization,SpringerSato,
MatrixManifolds,gabay1982}, and $f : \mathcal{M}\to\mathbb{R}$ is a smooth and non-convex function.

Riemannian manifolds frequently arise in practical applications, such as machine learning \cite{lin2008,tuzel2008}, radar communication \cite{liu2018}, low-rank matrix (and tensor) completion \cite{vandereycken2013,kressner2013,boumal-absil2015}, invariant subspace computation \cite{Absil2004}, semidefinite programming \cite{li2023}, analysis of shape spaces \cite{ring-wirth2012}, and synchronization over the special Euclidean group \cite{rosen2016}.

Several well-known methods for standard unconstrained optimization in the Euclidean space $\mathcal E=\mathbb{R}^n$ \cite{SciandroneGrippo, bertsekas2016} have been adapted to the Riemannian setting, such as conjugate gradient methods \cite{sato2022,boumal-absil2015,vandereycken2013}, trust-region methods \cite{absil2007,boumal-absil2015,boumal2015}, Barzilai--Borwein methods \cite{iannazzo-porcelli2017}, and L-BFGS methods \cite{huang2018}. 
Convergence results for these methods have been extensively analyzed in the literature, with several studies providing global convergence rates. For instance, under Lipschitz-type assumptions on $f$, \cite{complexity_cartis} provided worst-case complexity bounds for the Riemannian gradient descent and the Riemannian trust-region methods. Very recently, in \cite{zhou2026} it was proved that the Riemannian conjugate gradient method produces an $\epsilon$-stationary point with an iteration complexity of $\mathcal O(\epsilon^{-2})$ under a non-monotone line-search, together with a restart strategy that replaces the computed direction with the negative Riemannian gradient, rescaled by the Barzilai--Borwein factor \cite{barzilai-borwein1988,raydan1997}, whenever certain conditions are not satisfied. These conditions can be interpreted as a generalization of the \textit{gradient-related} conditions proposed for unconstrained optimization in \cite{cartis2015}.  

In this work, we investigate Riemannian gradient methods with momentum, namely iterative schemes in which the update rule combines the Riemannian gradient with a suitable contribution from the previous search direction.

Riemannian gradient methods with momentum have been recently studied in \cite{RegolarizzazioneCubica}. The algorithm proposed therein computes, at each iteration, the solution of a cubic-regularized subproblem restricted to a suitably chosen subspace of the tangent space, and is coupled with a (possibly non-monotone) line-search procedure. Under mild assumptions involving second-order information, the authors establish a worst-case iteration complexity bound of $\mathcal{O}(\epsilon^{-2})$.

In the present work, drawing inspiration from the recent papers \cite{lapucci2025} and \cite{lapucci2026}, we propose a line-search-based method in which, at each iteration, the search direction is obtained by solving a quadratic bidimensional subproblem involving a linear combination of two directions in the tangent space.
The core of the proposed method lies in the construction of the quadratic subproblem, which must account for technical and computational issues related to the fact that the feasible set is a Riemannian manifold.
Under standard assumptions, we prove that the algorithm, supported by a suitable restart strategy, produces an $\epsilon$-stationary point with an iteration complexity of $\mathcal O(\epsilon^{-2})$.

The main contribution of this paper is the development of a novel first-order algorithm for optimization on manifolds, equipped with rigorous global convergence guarantees and competitive computational complexity. Its practical effectiveness is corroborated by extensive numerical experiments demonstrating performance on par with, or superior to, current state-of-the-art methods.

The remainder of the paper is organized as follows. Section \ref{sec:preliminari} provides a summary of fundamental concepts of Riemannian manifolds and Riemannian optimization. Section \ref{sec:convergenza} presents convergence results for a general method based on \textit{gradient-related} directions. Section \ref{sec:Rn} summarizes the approach described in \cite{lapucci2025}, whose Riemannian adaptation is discussed in Section \ref{sec:generalizzazione}. In Section \ref{sec:approccio} we provide some algorithmic considerations and present the proposed convergent algorithm. Section \ref{sec:esperimenti} discusses numerical results, and Section \ref{sec:conclusioni} concludes the paper with final remarks.

\section{Preliminary technical results on Riemannian manifolds}
\label{sec:preliminari}

In this section, we briefly review some basic concepts from Riemannian optimization; see, e.g.,~\cite{RiemannOptimization}.
Let $\mathcal{E}$ be a finite-dimensional Euclidean space of dimension $n$, endowed with an inner product $\langle \cdot, \cdot \rangle$. Typical examples include $\mathcal{E} = \mathbb{R}^n$, with $\langle x, y \rangle = x^\top y$, or $\mathcal{E} = \mathbb{R}^{n_1 \times n_2}$, with $\langle X, Y \rangle = \mathrm{tr}(X^\top Y)$.  
A \emph{smooth embedded manifold} $\mathcal{M}$ is a subset of $\mathcal{E}$ such that, for each point $x \in \mathcal{M}$, there exists a local linear approximation of $\mathcal{M}$ at $x$, called the \emph{tangent space} and denoted by $\T_x \mathcal{M}$.
If $\mathcal{M}$ is defined as
\begin{equation}
\label{eqn:manLICQ}
\mathcal{M} = \{ x \in \mathcal{E} : h_1(x) = 0, \dots, h_m(x) = 0 \},
\end{equation}
where $m < n$ and $h_1, \ldots, h_m : \mathcal{E} \to \mathbb{R}$ are smooth functions satisfying the Linear Independence Constraint Qualification (LICQ), then the tangent space at $x$ is given by
$\T_x \mathcal{M}
= \left\{ v \in \mathcal{E} : \langle \nabla h_i(x), v \rangle = 0, \; i = 1, \ldots, m \right\}$.

Each tangent space $\T_x \mathcal{M}$ can be endowed with the inner product inherited from $\mathcal{E}$. With this choice of metric, $\mathcal{M}$ is called a \emph{Riemannian submanifold} of $\mathcal{E}$.

Let $f : \mathcal{M} \to \mathbb{R}$ be a continuously differentiable function. The \emph{Riemannian gradient} of $f$ at $x \in \mathcal{M}$ is defined as the unique vector $\grad f(x) \in \T_x \mathcal{M}$ such that
$\langle \grad f(x), v \rangle = \mathrm{D} f(x)[v]$ for all $v \in \T_x \mathcal{M}$.
It can be shown that
$\grad f(x) = \proj_x(\nabla f(x))$,
where $\nabla f(x)$ denotes the Euclidean gradient of $f$ and $\proj_x : \mathcal{E} \to \T_x \mathcal{M}$ is the orthogonal projection onto the tangent space.

The \emph{Riemannian Hessian} of $f$ at $x$ is defined as the self-adjoint linear operator
$\mathrm{Hess}\, f(x) : \T_x \mathcal{M} \to \T_x \mathcal{M}$,
given by
$\mathrm{Hess}\, f(x)[v] = \proj_x \big( \mathrm{D} \grad f(x)[v] \big)$.

For each $x \in \mathcal{M}$, a \emph{retraction} is a smooth mapping $\R_x : \T_x \mathcal{M} \to \mathcal{M}$ satisfying
$\R_x(0) = x$ and 
$\mathrm{D} \R_x(0) = \mathrm{Id}$.

Moreover, for any $x, y \in \mathcal{M}$, a \emph{vector transport} is a linear map
$\T_{y \leftarrow x} : \T_x \mathcal{M} \to \T_y \mathcal{M}$
such that $\T_{x \leftarrow x}$ is the identity on $\T_x \mathcal{M}$. For our purposes, a practical choice may be given by the orthogonal projection onto $\T_y \mathcal{M}$, restricted to $\T_x \mathcal{M}$, namely
$\T_{y \leftarrow x} := \proj_{y|_{\T_x \mathcal{M}}}$.

Riemannian optimization algorithms aim to minimize a smooth function $f$ over $\mathcal{M}$.
Using a first-order Taylor expansion of $f$, a necessary condition for a point $x^* \in \mathcal{M}$ to be a local minimizer is that it is a stationary point, namely
$\grad f(x^*) = 0$.
Furthermore, if $\R_x$ is a second-order retraction and $f$ is twice continuously differentiable, a second-order Taylor expansion yields the condition
$\langle v, \mathrm{Hess}\, f(x^*)[v] \rangle \ge 0$
 for all $v \in \T_{x^*} \mathcal{M}$.
When $\mathcal{M}$ is defined as in~\eqref{eqn:manLICQ}, these conditions are equivalent to the first- and second-order Karush-Kuhn-Tucker optimality conditions.

\section{A convergent first-order framework}
\label{sec:convergenza}

In this section, we present a general line-search-based framework with guaranteed convergence properties. The theoretical results rely on the use of a sequence of \emph{gradient-related} search directions and an Armijo line-search strategy.

We note that the results of this section follow from the more general analysis in~\cite{zhou2026}. For completeness, we provide here the statement, while the proof is reported in Appendix \ref{app:proofs}.

Let $\mathcal{M}$ be a smooth Riemannian submanifold of a finite-dimensional Euclidean space $\mathcal{E}$, and let $f : \mathcal{M} \to \mathbb{R}$ be a continuously differentiable function. We do not assume $f$ to be convex; however, we assume that $f$ is bounded from below:

\begin{assumption}
\label{ass:flow}
There exists $f_{\mathrm{low}} \in \mathbb{R}$ such that
$f(x) \ge f_{\mathrm{low}}$ for all $x \in \mathcal{M}$.
\end{assumption}

Consider the following optimization problem:
\begin{equation*}
   \min\limits_{x \in \mathcal{M}} f(x).
\end{equation*}
A general first-order Riemannian algorithm generates a sequence of points $\{x_k\}\subset \mathcal M$ possibly converging to a stationary point $x^*\in \mathcal M$, that is, such that 
\begin{equation*}
    \grad f(x^*)=0.
\end{equation*}
The general updating rule can be summarized as follows: given a point $x_k\in \mathcal M$, find a direction $d_k \in \T_{x_k}\mathcal M$ and a step size $\eta_k>0$ (via a line-search), and compute a new point in $\mathcal M$ as
\begin{equation*}
x_{k+1} = \R_{x_k}(\eta_k d_k),
\end{equation*}
where $\R_{x_k}$ is a retraction.
We say that a sequence of directions $\{d_k\}$ is \emph{gradient-related} to the sequence $\{x_k\}$ if the following assumption holds.

\begin{assumption}
\label{ass:gradientrelated}
There exist constants $0<c_1 \le c_2$ such that, for all $k$,
\begin{subequations} \label{eqn:gradrelated}
\begin{equation}
\langle \grad f(x_k), d_k \rangle \le - c_1 \|\grad f(x_k)\|^2,\label{eqn:gradrelated1}
\end{equation}
\begin{equation}
\|d_k\| \le c_2 \|\grad f(x_k)\|.\label{eqn:gradrelated2}
\end{equation}
\end{subequations}
\end{assumption}

Note that the notion of gradient-related directions extends the corresponding concept introduced for unconstrained optimization \cite{cartis2015}. Moreover, we assume the following.

\begin{assumption}
\label{ass:assunzione}
There exists $L>0$ and, for all $k$, there exists $\rho_k>\|d_k\|$ (possibly $\rho_k=\infty$ for all $k$) such that:
\begin{enumerate}
\item the retraction $\R_{x_k}$ is defined at least on the ball $B(\rho_k)\cap\T_{x_k}\mathcal{M}$;
\item for all $d\in B(\rho_k)\cap \T_{x_k}\mathcal{M}$,
\begin{equation}
\label{eqn:lipschitz}
| f(\R_{x_k}(d)) - f(x_k) - \langle \grad f(x_k), d \rangle | \le \frac{L}{2}\|d\|^2.
\end{equation}
\end{enumerate}
\end{assumption}

We remark that, in the Riemannian optimization setting, one could introduce a notion of Lipschitz continuity for the Riemannian gradient. However, as also discussed in~\cite{complexity_cartis}, it is more convenient to directly assume the Lipschitz-type condition~\eqref{eqn:lipschitz}.

We now discuss the theoretical properties of a general first-order algorithm in which the sequence of search directions $\{d_k\}$ is gradient-related to the iterates $\{x_k\}$, as described in Algorithm~\ref{alg:gradrel}. Moreover, we assume that an Armijo line-search procedure with initial step size equal to $1$ is employed (see Algorithm~\ref{alg:armijo}).

\begin{algorithm2e}[ht]
  \SetAlCapHSkip{0.22cm}
  \caption{First-order algorithm with gradient-related directions}
  \label{alg:gradrel}
  \SetInd{0.1cm}{0.5cm}
  \SetVlineSkip{0.1cm}
  \SetNlSkip{0.2cm}
  \SetNoFillComment
  \KwIn{$x_0\in\mathcal{M}$, $\gamma\in(0,1)$, $\delta\in(0,1)$, $\epsilon>0$, $0<c_1\le c_2$}
  \KwOut{approximate stationary point $x_k$}
  $k \leftarrow 0$\;
  \While{$\|\grad f(x_k)\| > \epsilon$}{
    choose $d_k \in \T_{x_k}\mathcal M$ such that 
    $\langle \grad f(x_k), d_k\rangle\le -c_1\|\grad f(x_k)\|^2$ and
    $\|d_k\|\le c_2\|\grad f(x_k)\|$\; \label{line:dk}
    $\eta_k \leftarrow \texttt{Armijo}(x_k, f(x_k), \grad f(x_k), d_k, 1, \gamma, \delta)$\;
    $x_{k+1} \leftarrow R_{x_k}(\eta_k d_k)$\;
    $k \leftarrow k + 1$\;
  }
\end{algorithm2e}

\begin{algorithm2e}[ht]
  \SetAlCapHSkip{0.22cm}
  \caption{Armijo line-search}
  \label{alg:armijo}
  \SetInd{0.1cm}{0.5cm}
  \SetVlineSkip{0.1cm}
  \SetNlSkip{0.2cm}
  \SetNoFillComment
  \SetKwFunction{FArmijo}{Armijo}
  \SetKwProg{Fn}{Function}{:}{}
  \Fn{\FArmijo{$x_k, f(x_k), \grad f(x_k), d_k, \eta_k, \gamma, \delta$}}{
    \While{$f\big(R_{x_k}(\eta_k d_k)\big) > f(x_k) + \gamma\,\eta_k\,\langle \grad f(x_k), d_k\rangle$}{
      $\eta_k \leftarrow \delta\,\eta_k$\;
    }
    \Return $\eta_k$\;
  }
\end{algorithm2e}

\begin{proposition}
\label{prop:Convergenza}
Suppose that Assumptions \ref{ass:flow}--\ref{ass:gradientrelated}--\ref{ass:assunzione} are satisfied (with constants $f_{\mathrm{low}}\in \mathbb R$, $0 < c_1\le c_2$, and $L>0$). Let $\delta\in(0,1)$ and $\gamma\in(0,1)$. Let $\{x_k\}$ and $\{d_k\}$ be the sequences produced by Algorithm~\ref{alg:gradrel}. Assume that $d_k$ is computed without additional function or gradient evaluations. For any $\epsilon>0$, define:
\begin{itemize}
\item $k_{\epsilon}$: the first iteration such that $x_{k_\epsilon}$ satisfies $\|\grad f(x_{k_\epsilon})\|\le \epsilon$;
\item $nf_{\epsilon}$: the number of function evaluations (and retractions) required to compute $x_{k_\epsilon}$;
\item $ni_{\epsilon}$: the total number of iterations $k$ such that $\|\grad f(x_k)\|> \epsilon$.
\end{itemize}
Then, in the worst case, Algorithm~\ref{alg:gradrel} guarantees:
\begin{enumerate}
\item if $L\le2(1-\gamma)\frac{c_1}{c_2^2}$, then
\begin{equation}
\label{eqn:caso1}
k_\epsilon=nf_{\epsilon}\le \frac {f(x_0) - f_{\mathrm{low}}}{ \gamma c_1}\, \epsilon^{-2}={\cal O}(\epsilon^{-2});
\end{equation}
\item otherwise,
\begin{subequations}
\begin{equation}
\label{eqn:caso2a}
k_\epsilon\le \frac {L c_2^2\big(f(x_0) - f_{\mathrm{low}}\big)}{2 \delta \gamma(1-\gamma) c_1^2}\, \epsilon^{-2}={\cal O}(\epsilon^{-2}),
\end{equation}
\begin{equation}
\label{eqn:caso2b}
nf_{\epsilon}\le \frac {L c_2^2\big(f(x_0) - f_{\mathrm{low}}\big)}{2 \delta \gamma(1-\gamma) c_1^2}
\bigg(\big\lfloor \log_{\delta} \Big(\frac{2}{L} (1-\gamma)\frac{c_1}{c_2^2}\Big)\big\rfloor +2\bigg)\,
\epsilon^{-2}
= {\cal O}(\epsilon^{-2}).
\end{equation}
\end{subequations}
\end{enumerate}
Moreover, $ni_{\epsilon}$ satisfies the same bounds as in \eqref{eqn:caso1}--\eqref{eqn:caso2a}.
\end{proposition}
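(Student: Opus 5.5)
The plan is the standard Armijo sufficient-decrease argument: obtain a uniform lower bound on the accepted step, turn it into a per-iteration decrease proportional to $\|\grad f(x_k)\|^2$, and telescope using Assumption~\ref{ass:flow}.

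\emph{Step 1: lower bound on the accepted step.} Fix an iteration $k$ and a trial step $\eta\in(0,1]$. Then $\|\eta d_k\|\le\|d_k\|<\rho_k$, so Assumption~\ref{ass:assunzione} applies to $d=\eta d_k$ and gives
\[
f(\R_{x_k}(\eta d_k))\le f(x_k)+\eta\langle\grad f(x_k),d_k\rangle+\tfrac{L}{2}\eta^2\|d_k\|^2 .
\]
The Armijo test therefore holds whenever $\tfrac{L}{2}\eta\|d_k\|^2\le-(1-\gamma)\langle\grad f(x_k),d_k\rangle$. By \eqref{eqn:gradrelated1}--\eqref{eqn:gradrelated2}, the right-hand side is at least $(1-\gamma)c_1\|\grad f(x_k)\|^2$ and $\|d_k\|^2\le c_2^2\|\grad f(x_k)\|^2$. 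Hence the test is satisfied for every
\[
\eta\le\bar\eta:=\frac{2(1-\gamma)c_1}{Lc_2^2}.
\]
If $\bar\eta\ge1$, the unit step is accepted immediately, so $\eta_k=1$ and each iteration costs one function evaluation. Otherwise, backtracking stops at the first trial $\delta^j\le\bar\eta$ at the latest, so $\eta_k\ge\delta\bar\eta$. The number of trials is at most $\lfloor\log_\delta\bar\eta\rfloor+2$: the trials $j=0,\dots,j^*$ with $j^*=\lceil\log_\delta\bar\eta\rceil\le\lfloor\log_\delta\bar\eta\rfloor+1$.

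\emph{Step 2: per-iteration decrease.} Whenever $\|\grad f(x_k)\|>\epsilon$, the accepted step together with \eqref{eqn:gradrelated1} gives
\[
f(x_k)-f(x_{k+1})\ge\gamma\eta_k c_1\|\grad f(x_k)\|^2>\gamma\eta_{\min}c_1\epsilon^2,
\]
where $\eta_{\min}=1$ in case 1 and $\eta_{\min}=\delta\bar\eta$ in case 2. Summing over all such iterations and using $f\ge f_{\rm low}$ from Assumption~\ref{ass:flow} gives
\[
ni_\epsilon\,\gamma\eta_{\min}c_1\epsilon^2\le f(x_0)-f_{\rm low}.
\]
Here the iterates with gradient norm above $\epsilon$ need not be consecutive; this is harmless because $f$ is monotonically nonincreasing along all iterations. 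Substituting $\eta_{\min}$ yields \eqref{eqn:caso1} and \eqref{eqn:caso2a} for $ni_\epsilon$, and the same bounds hold for $k_\epsilon\le ni_\epsilon$.

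\emph{Step 3: counting evaluations.} Since $d_k$ requires no extra evaluations, $nf_\epsilon$ is the sum of the line-search trials over the first $k_\epsilon$ iterations. In case 1 this gives $nf_\epsilon=k_\epsilon$. In case 2 it gives $k_\epsilon(\lfloor\log_\delta\bar\eta\rfloor+2)$, which is \eqref{eqn:caso2b}.

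The main obstacle is the bookkeeping in Step 1: ensuring that every trial step lies in the ball where the retraction and \eqref{eqn:lipschitz} are valid, which is exactly why $\rho_k>\|d_k\|$ and $\eta\le1$ are needed, and getting the exact count of backtracking trials. The remaining steps are routine.
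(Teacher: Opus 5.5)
Your proposal is correct and follows essentially the same route as the paper. The paper first proves an Armijo lemma: every $\eta\le\bar\eta=\frac{2(1-\gamma)c_1}{Lc_2^2}$ is accepted, hence $\eta_k\ge\delta\bar\eta$ and each iteration uses at most $\lfloor\log_\delta\bar\eta\rfloor+2$ trials. It then telescopes the sufficient decrease against $f_{\mathrm{low}}$.

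The differences are cosmetic. You sum the decrease only over iterations with $\|\grad f(x_k)\|>\epsilon$, using monotonicity, and then deduce $k_\epsilon\le ni_\epsilon$; the paper instead sums $\|\grad f(x_k)\|^2$ up to $k_\epsilon$ and lets $K\to\infty$ for $ni_\epsilon$. You also check explicitly that every trial step lies in $B(\rho_k)$, which the paper uses without comment.
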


A proof of Proposition~\ref{prop:Convergenza} is reported in Appendix~\ref{app:proofs}.  

We observe that the assumptions adopted in this work are milder than those in \cite[Assumption~2.1]{RegolarizzazioneCubica}, where the authors established, for a practical algorithm, a worst-case complexity of $\mathcal O(\epsilon^{-2})$ under assumptions involving second-order information.

\section{An unconstrained gradient method with momentum}
\label{sec:Rn}

In this section, we briefly summarize the gradient method with momentum developed in~\cite{lapucci2025} for the Euclidean setting $\mathbb{R}^n$. Let $f : \mathbb{R}^n \to \mathbb{R}$ be a continuously differentiable function, not necessarily convex, and consider the unconstrained optimization problem
\[
\min_{x \in \mathbb{R}^n} f(x).
\]
The method generates a sequence of iterates $\{x_k\}$ that is globally convergent to stationary points of $f$.
Given the current iterate $x_k \in \mathbb{R}^n$, the next iterate is computed as
\[
x_{k+1} = x_k + \eta_k d_k,
\]
where $\eta_k > 0$ is determined by an Armijo line-search procedure, and the search direction $d_k$ is defined as
\[
d_k = -\alpha_k g_k + \beta_k s_k,
\]
with $g_k = \nabla f(x_k)$ and $s_k = x_k - x_{k-1}$. The coefficients $\alpha_k$ and $\beta_k$ are obtained by minimizing a quadratic model that approximates the function $f$ around $x_k$, namely
\begin{align*}
& \min_{d,\alpha,\beta} \; f(x_k) + g_k^\top d + \frac{1}{2} d^\top B_k d \\
& \text{s.t. } d = -\alpha g_k + \beta s_k,
\end{align*}
where $B_k \in \mathbb{R}^{n \times n}$ is a symmetric and positive-definite matrix.
This problem is equivalent to minimizing the following two-dimensional quadratic model:
\[
\min_{u \in \mathbb{R}^2} \; T_k^\top u + \frac{1}{2} u^\top H_k u,
\]
where
\[
T_k =
\begin{bmatrix}
-\|g_k\|^2 \\
g_k^\top s_k
\end{bmatrix},
\qquad
u =
\begin{bmatrix}
\alpha \\
\beta
\end{bmatrix},
\]
and
\begin{equation}
\label{eqn:HkBk}
H_k =
\begin{bmatrix}
g_k^\top B_k g_k & -g_k^\top B_k s_k \\
- g_k^\top B_k s_k & s_k^\top B_k s_k
\end{bmatrix}.
\end{equation}

The authors of~\cite{lapucci2025} propose two alternative strategies:
\begin{itemize}
\item defining the $2 \times 2$ matrix $H_k$ from the $n \times n$ matrix $B_k$ as in~\eqref{eqn:HkBk};
\item directly selecting any symmetric $2 \times 2$ matrix $H_k$, independently of $B_k$.
\end{itemize}
Convergence conditions are established for both approaches by enforcing the search directions $\{d_k\}$ to be \emph{gradient-related}, that is,
\[
g_k^\top d_k \le -c_1 \|g_k\|^2,
\qquad
\|d_k\| \le c_2 \|g_k\|,
\quad \text{for all } k,
\]
for some constants $c_1, c_2 > 0$.
In particular, it is shown that if the level set
$\mathcal{L}_0 := \{ x \in \mathbb{R}^n : f(x) \le f(x_0) \}$
is compact, then the method either produces a stationary point $x_\nu$ of $f$ in a finite number of iterations or generates an infinite sequence $\{x_k\}$ admitting limit points, all of which are stationary points of $f$. Moreover, if $\nabla f$ is Lipschitz continuous on $\mathbb{R}^n$, then the method finds a point $x_k$ such that $\|\nabla f(x_k)\| \le \epsilon$ in at most $\mathcal{O}(\epsilon^{-2})$ iterations, function evaluations, and gradient evaluations.

A key practical issue concerns the choice of the matrix $B_k$, or equivalently of $H_k$. Several options are discussed in~\cite{lapucci2025}, including the computation of the three entries of $H_k$ by enforcing interpolation conditions of the quadratic model at three distinct points. This approach, however, requires additional evaluations of the objective function. Therefore, it is not suitable for extension to optimization problems on manifolds, as it would involve retraction operations that may be computationally expensive. Our proposed strategy to overcome this issue will be presented later.

\section{Issues behind the generalization to the Riemannian case}
\label{sec:generalizzazione}

In this section, we aim to adapt the gradient method with momentum to the Riemannian setting, discussing the technical issues arising in this generalization.

In this framework, given a point $x_k \in \mathcal M$, a new point is generated as $x_{k+1} = \R_{x_k}(\eta_k d_k)$, where $\eta_k > 0$ is determined by an Armijo line-search procedure and $d_k$ is a direction belonging to the tangent space $\T_{x_k}\mathcal M$. Specifically, $d_k$ is defined as a combination of two elements of $\T_{x_k}\mathcal M$: the Riemannian gradient
\[
g_k = \grad f(x_k) = \proj_{x_k}\nabla f(x_k),
\]
and a momentum term $s_k$. In order to ensure that $s_k$ belongs to $\T_{x_k} \mathcal M$, it cannot be defined as the difference of two consecutive iterates $x_k - x_{k-1}$, as in the Euclidean case. Instead, we define
\[
s_k = \T_{x_k \leftarrow x_{k-1}}(\eta_{k-1} d_{k-1}),
\]
that is, by transporting the previous search direction into the tangent space $\T_{x_k}\mathcal M$. As a practical choice for the vector transport, we consider the orthogonal projection onto $\T_{x_k}\mathcal M$, namely
\[
s_k = \proj_{x_k}(\eta_{k-1} d_{k-1}).
\]

The search direction is then obtained as
\begin{equation}
\label{eqn:dk}
d_k = -\alpha_k g_k + \beta_k s_k,
\end{equation}
for some parameters $\begin{bmatrix} \alpha_k \\ \beta_k \end{bmatrix} \in \mathbb R^2$.

To compute $\alpha_k$ and $\beta_k$, as in the Euclidean case \cite{lapucci2025} summarized in Section~\ref{sec:Rn}, we consider minimizing a quadratic approximation of the function $f$ around the point $x_k$:
\begin{align}
\label{eqn:MinProblem}
\begin{split}
    &\min_{d \in \T_{x_k}\mathcal{M}} f(x_k) + \langle g_k, d \rangle + \frac{1}{2} \langle d, B_k[d] \rangle \\
    &\text{s.t.} \quad d = -\alpha g_k + \beta s_k,
\end{split}
\end{align}
where $\langle \cdot,\cdot\rangle$ denotes the inner product on the tangent space $\T_{x_k}\mathcal{M}$ inherited from $\mathcal E$, and $B_k$ is a linear, self-adjoint, and positive-definite operator mapping $\T_{x_k}\mathcal{M}$ into itself. Note that, if $f$ is twice continuously differentiable, $B_k$ is chosen as $\text{Hess} f(x_k)$, and $\R_{x_k}$ is a second-order retraction, then the model in \eqref{eqn:MinProblem} corresponds to the second-order Taylor approximation of $f$ around $x_k$.

As in the Euclidean case, the problem can be reformulated as a minimization problem in $\mathbb R^2$:
\begin{equation}
\label{eqn:minimizationProblem}
    \min_{u \in \mathbb{R}^2} T_k^{\top} u + \frac{1}{2} u^{\top} H_k u,
\end{equation}
where
\[
T_k =
\begin{bmatrix}
-\|g_k\|^2 \\
\langle g_k, s_k \rangle
\end{bmatrix},
\qquad
u =
\begin{bmatrix}
\alpha \\
\beta
\end{bmatrix},
\]
and
\begin{equation}
\label{eqn:Hk}
H_k =
\begin{bmatrix}
\langle g_k, B_k[g_k]\rangle & -\langle g_k, B_k[s_k]\rangle \\
-\langle g_k, B_k[s_k]\rangle & \langle s_k, B_k[s_k]\rangle
\end{bmatrix}.
\end{equation}

Once a solution $u_k = \begin{bmatrix} \alpha_k \\ \beta_k \end{bmatrix}$ of~\eqref{eqn:minimizationProblem} is computed, the direction $d_k$ is defined as in~\eqref{eqn:dk}. A major issue concerns the choice of the operator $B_k$ (or, equivalently, of the $2 \times 2$ matrix $H_k$) in an efficient way, possibly without requiring additional retractions or function and gradient evaluations. For instance, approximating the Riemannian Hessian $\text{Hess} f(x)[v]$ via finite differences for some $v \in \T_x\mathcal M$ requires an additional retraction and gradient evaluation. Moreover, the direction $d_k$ must be enforced to be \emph{gradient-related} in order to guarantee convergence.

\begin{remark}
\label{oss:EsistenzaSoluzione}
The fact that the operator $B_k$ is self-adjoint and positive-definite on $\T_{x_k}\mathcal{M}$ ensures that, if $g_k$ and $s_k$ are nonzero, problem~\eqref{eqn:MinProblem} admits an optimal solution. A proof can be obtained as in~\cite[][Proposition~2]{lapucci2025}, with minor adaptations. Moreover, if $g_k$ and $s_k$ are linearly independent, then the $2 \times 2$ matrix $H_k$ defined in~\eqref{eqn:Hk} is full rank and positive-definite, and hence the solution is unique. Furthermore, if the operators $\{B_k\}$ have uniformly bounded eigenvalues, then the sequence of directions $\{d_k\}$ computed via~\eqref{eqn:dk} is gradient-related to $\{x_k\}$, and Assumption~\ref{ass:gradientrelated} is satisfied. A proof follows arguments similar to those in~\cite[][Proposition~3]{lapucci2025}.
\end{remark}

\section{The proposed approach}
\label{sec:approccio}

In this section, we discuss some practical choices for the Riemannian gradient method with momentum. Moreover, we establish the global convergence of the proposed approach, which directly follows from the discussion in Section \ref{sec:convergenza}, as we choose a particular set of directions $\{d_k\}$ that is gradient-related to the sequence of points $\{x_k\}$.

The scheme for the Riemannian gradient method with momentum is the same as Algorithm \ref{alg:gradrel}, with the direction $d_k$ in line \ref{line:dk} found according to these steps:
\begin{itemize}
\item choose a self-adjoint and positive-definite operator $B_k$, define the $2 \times 2$ symmetric and positive-definite matrix $H_k$ (as in \eqref{eqn:Hk}), and consider the quadratic minimization problem \eqref{eqn:minimizationProblem};
\item  find a solution $[\alpha_k, \beta_k]^{\top}$ of \eqref{eqn:minimizationProblem} and find the direction as $d_k=-\alpha_kg_k+\beta_k s_k$;
\item if $d_k$ is not gradient-related define a new gradient-related direction $d_k$ belonging to $\T_{x_k}\mathcal M$ according to some rules.
\end{itemize}

Two major issues are:
\begin{enumerate}
\item how to properly and efficiently define the operator \(B_k\), and, hence, the matrix $H_k$ in problem \eqref{eqn:minimizationProblem}?
\item how to choose a new gradient-related direction if the found one does not satisfy this condition?
\end{enumerate}

\subsection{Practical choice of operator $B_k$}
\label{sec:approxHk}

In this subsection, we provide an efficient way to choose matrix \(H_k\) in the problem (\ref{eqn:minimizationProblem}):
\[
H_k =
\begin{bmatrix}
\langle g_k, B_k[g_k]\rangle & -\langle g_k, B_k[s_k]\rangle \\
-\langle g_k, B_k[s_k]\rangle & \langle s_k, B_k[s_k]\rangle
\end{bmatrix}.
\]
We aim to consider a self-adjoint and positive-definite operator $B_k$ that should resemble the Riemannian Hessian $\text{Hess}f(x_k)$ in the case of $f$ twice continuously differentiable.
Note that it is not required to explicitly compute the operator $B_k$, but only its applications $B_k[g_k]$ and $B_k[s_k]$. We could use a finite differences approximation of the Hessian or the interpolation technique used in \cite{lapucci2025,lapucci2026}; however, to avoid extra gradient or function evaluations, we follow a different approach. Consider the element in the tangent space $\T_{x_k}\mathcal M$ defined as
\begin{equation*}
y_k=g_k-\T_{x_{k}\leftarrow x_{k-1}}(g_{k-1}).
\end{equation*}
Assume that the self-adjoint and positive-definite operator $B_k$ satisfies the secant equation 
\begin{equation*}
B_k[s_k]=y_k.
\end{equation*}
Then, defined $\rho_k=\langle g_k, B_k[g_k]\rangle$, we have to solve the system
\begin{equation}
\label{eqn:system2x2}
\begin{bmatrix}
\rho_k & -\langle g_k, y_k\rangle \\
-\langle g_k, y_k\rangle & \langle s_k, y_k\rangle
\end{bmatrix}
\begin{bmatrix}
\alpha \\ \beta    
\end{bmatrix} = \begin{bmatrix}\|g_k\|^2 \\ -\langle g_k, s_k \rangle \end{bmatrix}.
\end{equation}
We observe that the above system admits solution provided that the operator $B_k$ is self-adjoint, positive-definite, and $g_k$ and $s_k$ are linearly independent, Indeed, we have that the $2 \times 2$ matrix is positive-definite  (cf. Remark \ref{oss:EsistenzaSoluzione}).

For a practical choice of $B_k$, we follow the approach suggested by Yuan \& Stoer for the unconstrained optimization \cite{yuan-stoer1995}, where, assuming $s_k^\top y_k>0$, the scaled memoryless BFGS update for the $n\times n$ matrix $B_k$ is considered, i.e.,
\begin{equation}\label{eqn:Bk}
B_k =
\frac{1}{\lambda_k}
\left( I - \frac{s_k s_k^\top}{s_k^\top s_k} \right)
+ \frac{y_k y_k^\top}{s_k^\top y_k},
\end{equation}
being the scalar $\lambda_k > 0$  chosen according to the Barzilai--Borwein spectral gradient method~\cite{barzilai-borwein1988,raydan1997}, for example,
\[
\lambda_k = \frac{\|s_k\|^2}{s_k^\top y_k}.
\]
The  matrix $B_k$ defined by (\ref{eqn:Bk}) is symmetric and positive-definite, satisfies the secant equation $B_k s_k = y_k$, and does not require additional function or gradient evaluations to be constructed.

The extension to the case of Riemannian optimization is straightforward.
Assume that
\begin{equation}
\label{eqn:sy}
\langle s_k, y_k\rangle>0. 
\end{equation} 
Taking into account (\ref{eqn:Bk}),
define the operator $B_k:\T_{x_k}\mathcal M \to \T_{x_k}\mathcal M$ as follows: for every $d \in \T_{x_k}\mathcal M$,
\begin{equation}
\label{eqn:BFGS}
B_k[d] = \frac{1}{\lambda_k}\bigg(d-\frac{ \langle s_k, d\rangle}{\|s_k\|^2}s_k\bigg)+\frac{\langle y_k, d\rangle}{\langle s_k, y_k\rangle} y_k,
\end{equation}
with $\lambda_k>0$.
It is straightforward to verify that the operator $B_k$ defined in \eqref{eqn:BFGS} is self-adjoint, positive-definite, and verifies the secant equation: $B_k[s_k]=y_k$.

We can select the scalar $\lambda_k>0$ by following the Barzilai--Borwein method. Specifically, let $0<\lambda_{\min}\le\lambda_{\max}$; we can define $\lambda_k$ as:
\begin{equation}
\label{eqn:lambdaBB}
\lambda_k=\min\big\{\lambda_{\max},\max\{\lambda_{\min},\tilde \lambda_k\}\big\},
\end{equation}
where
\begin{equation}
\label{eqn:lambdaBB1-2}
\tilde \lambda_k=\lambda_k^{BB1}=\frac{\|s_k\|^2}{\langle s_k, y_k\rangle}, \quad \text{ or } \quad \tilde\lambda_k = \lambda_k^{BB2}=\frac{\langle s_k, y_k\rangle}{\|y_k\|^2}. 
\end{equation}

\subsection{The restart strategy}
\label{Sec:newdirection}
First, we observe that the condition $\langle s_k, y_k\rangle>0$  defined in \eqref{eqn:sy} ensures that the operator $B_k$ is positive-definite. However, it is not possible to guarantee the boundeness of the eigenvalues of the sequence of positive-definite operators $\{B_k\}$ without imposing further conditions. Hence, we cannot apply the Riemannian counterpart of \cite[Proposition 3]{lapucci2025} (cf. Remark \ref{oss:EsistenzaSoluzione}) and derive that the obtained sequence $\{d_k\}$ of gradient-related directions; we can only conclude that they are descent directions, i.e., $\langle d_k, g_k\rangle<0$ for all $k$.

To deal with this issue, we propose to follow the same restart strategy used in \cite{zhou2026}. Choose two constants $0<c_1\le c_2$, and verify whether $d_k$ satisfies the gradient-related conditions \eqref{eqn:gradrelated}. Otherwise, consider a new direction $d_k^{new}$ as
\begin{equation*}
d_k^{new}=-\lambda_kg_k,
\end{equation*}
i.e., by taking the negative gradient, rescaled by the Barzilai--Borwein factor $\lambda_k$, as defined in \eqref{eqn:lambdaBB}--\eqref{eqn:lambdaBB1-2}. Hence, in this case, our approach reduces to the one proposed by Iannazzo \& Porcelli \cite{iannazzo-porcelli2017}, but with a monotone line-search. It is straightforward to verify that the new direction is gradient-related, assuming that the constants are taken such that $c_1\le \lambda_{\min}$ and $\lambda_{\max}\le c_2$.

\begin{remark}
Condition $\langle s_k, y_k \rangle > 0$ in \eqref{eqn:sy} can be enforced by employing a line search based on the strong Wolfe conditions. However, this line-search could be significantly more expensive than Armijo's. In practice, we propose not to enforce this condition. In the cases when \eqref{eqn:sy} is not verified, we propose, as in \cite{iannazzo-porcelli2017}, to directly take the direction as
\begin{equation*}
d_k=-\lambda_{\max}g_k.
\end{equation*}
\end{remark}

\subsection{The convergent algorithm}
The proposed algorithm, based on the strategies previously discussed and with explicit formulae for $\alpha_k$ and $\beta_k$ obtained by solving system \eqref{eqn:system2x2}, is formalized in
 Algorithm \ref{alg:RGMM}. Convergence properties are stated in the following proposition.
\begin{proposition}
\label{prop:ConvergenzaAlg}
Suppose Assumptions \ref{ass:flow} and \ref{ass:assunzione} are satisfied.  Then,  Algorithm \ref{alg:RGMM} requires at most $\mathcal O(\epsilon^{-2})$ iterations, retractions, function and gradient evaluations to attain
\[ \|\grad f(x_k)\| \le \epsilon. \]
\end{proposition}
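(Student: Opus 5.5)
The plan is to reduce Proposition~\ref{prop:ConvergenzaAlg} to Proposition~\ref{prop:Convergenza}. Algorithm~\ref{alg:RGMM} is an instance of Algorithm~\ref{alg:gradrel}: it uses the same Armijo line-search with unit initial step, the same retraction update, and the same stopping test. The only thing to check is that the direction actually used at each iteration $k$ satisfies Assumption~\ref{ass:gradientrelated} with constants $0<c_1\le c_2$ that do not depend on $k$. Once this holds, Proposition~\ref{prop:Convergenza} gives bounds of order $\mathcal O(\epsilon^{-2})$ on iterations and function evaluations (and retractions), in both regimes \eqref{eqn:caso1} and \eqref{eqn:caso2a}--\eqref{eqn:caso2b}.

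\textbf{Step 1: gradient-relatedness.} I split into three cases according to how $d_k$ is produced.
\begin{enumerate}
\item If $\langle s_k,y_k\rangle\le 0$ (or if $k=0$, or if $g_k,s_k$ are linearly dependent so that \eqref{eqn:system2x2} is degenerate), the algorithm sets $d_k=-\lambda_{\max}g_k$. Then
\[
\langle g_k,d_k\rangle=-\lambda_{\max}\|g_k\|^2\le -c_1\|g_k\|^2
\quad\text{and}\quad
\|d_k\|=\lambda_{\max}\|g_k\|\le c_2\|g_k\|,
\]
provided the algorithm's constants satisfy $c_1\le\lambda_{\min}\le\lambda_{\max}\le c_2$.
\item If $\langle s_k,y_k\rangle>0$ and the momentum direction from \eqref{eqn:system2x2} passes the explicit test \eqref{eqn:gradrelated}, it is gradient-related by construction.
\item Otherwise the algorithm takes $d_k=-\lambda_k g_k$ with $\lambda_k\in[\lambda_{\min},\lambda_{\max}]$ by \eqref{eqn:lambdaBB}. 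The same computation gives the two inequalities with $c_1\le\lambda_{\min}$ and $c_2\ge\lambda_{\max}$.
\end{enumerate}
Hence Assumption~\ref{ass:gradientrelated} holds in every case. Assumption~\ref{ass:assunzione} requires $\rho_k>\|d_k\|$; this is taken as given for whichever direction is finally selected, as in the statement.

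\textbf{Step 2: count the per-iteration cost.} Proposition~\ref{prop:Convergenza} requires that $d_k$ be computed without additional function or gradient evaluations, so I need to verify this for Algorithm~\ref{alg:RGMM}. Computing $s_k=\proj_{x_k}(\eta_{k-1}d_{k-1})$ and $y_k=g_k-\proj_{x_k}(g_{k-1})$ needs only projections of stored vectors. The entries of \eqref{eqn:system2x2} require only inner products and $\rho_k=\langle g_k,B_k[g_k]\rangle$, which can be evaluated in closed form from \eqref{eqn:BFGS}. The safeguard test and the fallbacks use no new evaluations either. Therefore:
\begin{itemize}
\item the number of retractions and function evaluations equals $nf_\epsilon$;
\item the number of gradient evaluations is one per accepted iterate, i.e.\ $k_\epsilon+1$.
\end{itemize}
Both counts are $\mathcal O(\epsilon^{-2})$ by Proposition~\ref{prop:Convergenza}.

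\textbf{Main obstacle.} The delicate step is the bookkeeping in Step~1. The formulae for $\alpha_k,\beta_k$ may be ill-defined, for example when $g_k\parallel s_k$ or $\langle s_k,y_k\rangle\le0$, and Algorithm~\ref{alg:RGMM} must route every such case to a scaled gradient direction. Each fallback scaling factor must lie in $[c_1,c_2]$. I would first make explicit that the parameters are chosen with $c_1\le\lambda_{\min}\le\lambda_{\max}\le c_2$, and then check each branch of the algorithm against this interval.
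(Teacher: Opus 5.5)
Your proposal is correct and follows the same route as the paper. The paper's proof is one line: the directions of Algorithm~\ref{alg:RGMM} are gradient-related by construction, since every fallback is $-\lambda g_k$ with $\lambda\in[\lambda_{\min},\lambda_{\max}]\subseteq[c_1,c_2]$, and they are computed without extra evaluations, so Proposition~\ref{prop:Convergenza} applies; your branch-by-branch check just makes that explicit, with two small corrections: the $k=0$ branch uses $\lambda_0$ rather than $\lambda_{\max}$ (still in $[c_1,c_2]$), and the degenerate case $g_k\parallel s_k$ with $\langle s_k,y_k\rangle>0$ is not actually routed to a gradient step by the pseudocode (the formula for $\alpha_k$ becomes $0/0$), so that part is an amendment to the algorithm, as you rightly flag, rather than a description of it.
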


\begin{proof}
By construction, the sequence $\{d_k\}$ is gradient-related to $\{x_k\}$; hence, Assumption \ref{ass:gradientrelated} is satisfied. In addition, no extra function or gradient evaluations are performed to compute the direction $d_k$. Then, Proposition \ref{prop:Convergenza} can be applied, attaining the worst-case  complexity bound $\mathcal O(\epsilon^{-2})$.
\end{proof}

\begin{algorithm2e}
  \SetAlCapHSkip{0.22cm}
  \caption{Riemannian Gradient Method with Momentum}
  \label{alg:RGMM}
  \SetInd{0.1cm}{0.5cm}
  \SetVlineSkip{0.1cm}
  \SetNlSkip{0.2cm}
  \SetNoFillComment
  \KwIn{$x_0\in\mathcal{M}$, $\gamma\in(0,1)$, $\delta\in(0,1)$, $\epsilon>0,$ $0<c_1 \le \lambda_{\min} \le \lambda_0 \le \lambda_{\max} \le c_2$}
  \KwOut{approximate stationary point $x_k$}
  $k \leftarrow 0$\\
  $f_k \leftarrow f(x_k)$\\
  $g_k \leftarrow \grad f(x_k)$\\

	\While{$\|g_k\|> \epsilon$}{
    \If{$k=0$}{
    $
			d_k\leftarrow -\lambda_{0}g_k
			$\\ 
    }
    \Else{
    $s_k \leftarrow \T_{x_k\leftarrow x_{k-1}}(\eta_{k-1}d_{k-1})$\\
           $y_k \leftarrow g_k-\T_{x_k\leftarrow x_{k-1}}(g_{k-1})$\\
    \texttt{\# Check curvature condition}\\
    \If{$\langle s_k, y_k\rangle\le 0$}{
    $
			d_k\leftarrow -\lambda_{\max}g_k
			$\\
    }
    \Else{
    \texttt{\# Compute coefficients and direction}\\
	   $\lambda_k\leftarrow\min\big\{\lambda_{\max},\max\{\lambda_{\min},\frac{\|s_k\|^2}{\langle s_k, y_k\rangle}\}\big\}$\\
		\label{line:alpha} $\alpha_k \leftarrow \lambda_k \frac{\|g_k\|^2\langle s_k, y_k\rangle-\langle g_k, y_k\rangle\langle g_k, s_k \rangle} {\langle s_k, y_k  \rangle\bigg(\|g_k\|^2-\frac{ \langle g_k, s_k\rangle^2}{\|s_k\|^2}\bigg)}$\\
        \label{line:beta} $\beta_k \leftarrow \frac{\alpha_k\langle g_k, y_k  \rangle - \langle g_k, s_k  \rangle}{\langle s_k, y_k  \rangle}$\\
        $
			d_k\leftarrow -\alpha_kg_k+\beta_k s_k
			$\\
        \texttt{\# Check gradient-related direction}\\    
        \If{$\langle g_k, d_k\rangle>  -c_1\|g_k\|^2$ \emph{\textbf{or}} $\|d_k\|>  c_2 \|g_k\|$ }{ $d_k \leftarrow -\lambda_k g_k$}
        }
       
        }
         \texttt{\# Armijo line-search}\\   
         \label{line:eta0}$\eta \leftarrow 1$\\
         \While{$f\big(R_{x_k}(\eta d_k)\big) > f_k + \gamma\,\eta\,\langle g_k, d_k\rangle$}{
         $\eta \leftarrow \delta\,\eta$\;
        }
         $\eta_k \leftarrow \eta$\\
        $x_{k+1} \leftarrow R_{x_k}(\eta_k d_k)$\\
        $k \leftarrow k + 1$\\
        $f_k \leftarrow f(x_k)$\\
        $g_k \leftarrow \grad f(x_k)$\\
 	}
\end{algorithm2e}

\section{Experimental results}
\label{sec:esperimenti}

Following the pseudocode in Algorithm \ref{alg:RGMM}, we implemented the proposed Riemannian Gradient Method with Momentum (\texttt{RGMM}) in MATLAB R2025b. The implementation and the scripts used for the experiments are publicly available at \url{https://github.com/DiegoScuppa/RGMM}.

As described in Section \ref{sec:approxHk}, the operator $B_k$ is chosen as the memoryless BFGS update \eqref{eqn:BFGS}, which leads to the explicit expressions for $\alpha_k$ and $\beta_k$ reported in lines \ref{line:alpha}--\ref{line:beta}. The step length $\lambda_k$ is selected according to \eqref{eqn:lambdaBB}, using one of the following \texttt{strategy} options: the \texttt{direct} formula ($\lambda_k^{BB1}$ in \eqref{eqn:lambdaBB1-2}), the \texttt{inverse} formula ($\lambda_k^{BB2}$), or the \texttt{alternate} scheme, as proposed in \cite{iannazzo-porcelli2017}. Moreover, as discussed in Section \ref{Sec:newdirection}, when the gradient-related conditions are violated, the method falls back to a Barzilai--Borwein step, as in \cite{iannazzo-porcelli2017}.

We compare \texttt{RGMM} against several solvers from the Manopt package \cite{manopt}: Riemannian Barzilai--Borwein (\texttt{RBB}), Riemannian Conjugate Gradient (\texttt{RCG}), Riemannian Trust-Region (\texttt{RTR}), and Riemannian L-BFGS (\texttt{RLBFGS}). For a fair comparison, \texttt{RGMM} was implemented by modifying Manopt’s \texttt{barzilaiborwein.m} routine from \cite{iannazzo-porcelli2017}, ensuring full compatibility with Manopt’s interface.

All experiments were run on an Intel Core Ultra~7~155H machine with 16~GB of RAM. We tested 15 problems from the Manopt distribution, using five parameter combinations per problem, for a total of 75 problem instances. Table \ref{tab:problemi} summarizes the problem set and the associated manifold structures. The first eight problems coincide with those considered in \cite{iannazzo-porcelli2017}, where only the first three parameter combinations were used.

\begin{table}
\tbl{Test problems and manifold structure. The first eight problems were also considered in \cite{iannazzo-porcelli2017}.}
{\begin{tabular}{llllc} \toprule

\textbf{Name} & \multicolumn{2}{l}{\textbf{Size}} & \textbf{Description} & \textbf{Manifold} \\ 
\midrule

\hline
\multirow{3}{*}{KM} & (a) $n = 50$ & (b) $n = 100$ & \multirow{3}{6cm}{Computing the Karcher Mean of a set of $n \times n$ positive-definite matrices.} & \multirow{3}{2cm}{SPD}  \\
& (c) $n = 200$ & (d) $n = 500$ & \\
&(e) $n = 1000$ & & \\

\hline

\multirow{5}{*}{SPCA} & 
\multicolumn{2}{l}{(a)  $n = 100,\, p = 10,\, m = 2$} & \multirow{5}{6cm}{Computing the $m$ Sparse Principal Components of a $p \times n$ matrix encoding $p$ samples of $n$ variables.} 
& \multirow{5}{2cm}{Stiefel} \\
&\multicolumn{2}{l}{(b) $n = 500,\, p = 15,\, m = 5$} \\ 
&\multicolumn{2}{l}{(c) $n = 1000,\, p = 30,\, m = 10$}  \\
&\multicolumn{2}{l}{(d) $n = 2000,\, p = 60,\, m = 20$}  \\
&\multicolumn{2}{l}{(e) $n = 5000,\, p = 1500,\, m = 50$}  \\

\hline

\multirow{3}{*}{SP} & (a)
$n = 8$ & (b) $n = 12$ & \multirow{3}{6cm}{Finding the largest diameter of $n$ equal circles that can be placed on the sphere without overlap (Sphere Packing problem).} 
& \multirow{3}{2cm}{Product of Spheres} \\  
&(c) $n = 24$ & (d) $n = 50$  \\
&(e) $n = 100$ & \\

\hline

\multirow{3}{*}{DIS} & 
(a) $n = 128$ &(b) $n = 500$ & \multirow{3}{6cm}{Finding an orthonormal basis of the Dominant Invariant 3-Subspace of an $n \times n$ matrix.} 
& \multirow{3}{2cm}{Grassmann} \\ 
&(c) $n = 1000$ &(d) $n = 2000$  \\
&(e) $n = 5000$  \\

\hline

\multirow{5}{*}{LRMC} & 
\multicolumn{2}{l}{(a) $n = 100,\, p = 10,\, m = 20$} & \multirow{5}{6cm}{Low-Rank Matrix Completion: given partial observations of an $m \times n$ matrix of rank $p$, attempting to complete it.} 
& \multirow{5}{2cm}{Fixed-rank matrices} \\ 
&\multicolumn{2}{l}{(b) $n = 500,\, p = 15,\, m = 50$} \\ 
&\multicolumn{2}{l}{(c) $n = 1000,\, p = 30,\, m = 100$}  \\
&\multicolumn{2}{l}{(d) $n = 2000,\, p = 60,\, m = 200$}  \\
&\multicolumn{2}{l}{(e) $n = 5000,\, p = 150,\, m = 500$}  \\

\hline

\multirow{3}{*}{MC} & 
(a) $n = 20$ &(b) $n = 100$ & \multirow{3}{6cm}{Max-Cut: given an $n \times n$ Laplacian matrix of a graph, finding the max-cut or an upper bound on the maximum cut value.} 
& \multirow{3}{2cm}{SPD matrices of rank 2} \\ 
&(c) $n = 300$ &(d) $n = 1000$ \\
&(e) $n = 5000$ \\

\hline

\multirow{5}{*}{TSVD} & 
\multicolumn{2}{l}{(a) $n = 60,\, m = 42,\, p = 5$} & \multirow{5}{6cm}{Computing the SVD decomposition of an $m \times n$ matrix truncated to rank $p$.} 
& \multirow{5}{2cm}{Grassmann}  \\ 
&\multicolumn{2}{l}{(b) $n = 100,\, m = 60,\, p = 7$} \\ 
&\multicolumn{2}{l}{(c) $n = 200,\, m = 70,\, p = 8$} \\
&\multicolumn{2}{l}{(d) $n = 500,\, m = 80,\, p = 10$} \\
&\multicolumn{2}{l}{(e) $n = 1000,\, m = 100,\, p = 15$} \\

\hline

\multirow{5}{*}{GP} & 
\multicolumn{2}{l}{(a) $m = 10,\, N = 50$} & \multirow{5}{6cm}{Generalized Procrustes: rotationally align clouds of points. Data: matrix $A \in \mathbb{R}^{3 \times m \times N}$, each slice is a cloud of $m$ points in $\mathbb{R}^3$.} 
& \multirow{5}{2cm}{Product of rotations with Euclidean space for $A$} \\ 
&\multicolumn{2}{l}{(b) $m = 50,\, N = 100$} \\ 
&\multicolumn{2}{l}{(c) $m = 50,\, N = 500$} \\
&\multicolumn{2}{l}{(d) $m = 100,\, N = 1000$} \\
&\multicolumn{2}{l}{(e) $m = 500,\, N = 5000$} \\

\hline

\multirow{5}{*}{RIC} & 
\multicolumn{2}{l}{(a) $N = 10,\, K = 2$} & \multirow{5}{6cm}{Radio Interometric Calibration: compute the gain matrices of $N$ stations with $K$ receivers.} 
& \multirow{5}{2cm}{Symmetric fixed-rank complex matrices} \\ 
&\multicolumn{2}{l}{(b) $N = 20,\, K = 3$} \\ 
&\multicolumn{2}{l}{(c) $N = 50,\, K = 5$} \\
&\multicolumn{2}{l}{(d) $N = 100,\, K = 10$} \\
&\multicolumn{2}{l}{(e) $N = 200,\, K = 20$} \\

\hline

\multirow{3}{*}{ESVD} & 
(a) $N = 10$ &(b) $N = 100$ & \multirow{3}{6cm}{Essential SVD: solves the problem $\sum_{i=1}^N ||E_i-A_i||^2$ ($E_i$ essential matrices).} 
& \multirow{3}{2cm}{Essential matrices} \\ 
&(c) $N = 1000$ &(d) $N = 10000$ \\
&(e) $N = 100000$ \\

\hline

\multirow{3}{*}{DSD} & 
(a) $n = 50$ &(b) $n = 100$  & \multirow{3}{6cm}{Doubly Stochastic Denoising: Find a doubly stochastic matrix closest to a given matrix, in Frobenius norm.} 
& \multirow{3}{2cm}{Multinomial symmetric matrices} \\ 
&(c) $n = 200$ &(d) $n = 500$ \\
&(e) $n = 1000$ \\

\hline

\multirow{3}{*}{ESDP} & 
(a) $n = 100$ & (b) $n = 200$ & \multirow{3}{6cm}{Elliptope SDP: semidefinite programming (SDP) with unit diagonal constraints.} 
& \multirow{3}{2cm}{Product of Spheres} \\ 
&(c) $n = 500$ &(d) $n = 1000$ \\
&(e) $n = 2000$ \\ 

\hline

\multirow{5}{*}{PDIM} & 
\multicolumn{2}{l}{(a) $n = 2,\, m = 20$} & \multirow{5}{6cm}{Positive-definite Intrinsic Mean: computing an intrinsic mean of a set of $m$ positive-definite matrices with size $n \times n$.} 
& \multirow{5}{2cm}{SDP matrices} \\ 
&\multicolumn{2}{l}{(b) $n = 5,\, m = 50$} \\
&\multicolumn{2}{l}{(c) $n = 10,\, m = 100$} \\
&\multicolumn{2}{l}{(d) $n = 20,\, m = 200$} \\
&\multicolumn{2}{l}{(e) $n = 50,\, m = 500$} \\

\hline

\multirow{3}{*}{GEC} & 
(a) $n = 32$ & (b) $n = 64$ & \multirow{3}{6cm}{Finding an orthonormal basis of the Dominant Invariant 3-Subspace of $B^{-1}A$, where $A$ and $B$ are $n \times n$ matrices.} 
& \multirow{3}{2cm}{Grassmann generalized} \\ 
&(c) $n = 128$ & (d) $n = 256$  \\
&(e) $n = 512$  \\

\hline

\multirow{5}{*}{NE} & 
\multicolumn{2}{l}{(a) $n = 50,\, k = 5$} & \multirow{5}{6cm}{Nonlinear Eigenspace: solving the nonlinear eigenvalue problem.} 
& \multirow{5}{2cm}{Grassman} \\
& \multicolumn{2}{l}{(b) $n = 100,\, k = 10$} \\
& \multicolumn{2}{l}{(c) $n = 200,\, k = 20$} \\
& \multicolumn{2}{l}{(d) $n = 500,\, k = 50$} \\
& \multicolumn{2}{l}{(e) $n = 1000,\, k = 100$} \\
\bottomrule
\end{tabular}}
\label{tab:problemi}
\end{table}

Default solver parameters were used unless stated otherwise. For \texttt{RCG}, we employed the modified Hestenes--Stiefel update for $\beta_k$, which is the default choice and proved to be the most effective in our preliminary tests. For \texttt{RTR}, even when an analytical Hessian was available, we used finite-difference approximations in order to focus the comparison on function and gradient evaluations, as well as CPU time and iteration counts.

For \texttt{RGMM}, we set $c_1 = 10^{-9}$ and $c_2 = 10^{9}$. Except for $\lambda_0$, which was fixed to~$1$, all remaining parameters were set to the default values of \texttt{RBB}: $\lambda_{\min} = 10^{-3}$, $\lambda_{\max} = 10^{3}$, $\gamma = 10^{-4}$, $\delta = 0.5$, and \texttt{strategy}=\texttt{direct}. Unlike \texttt{RBB}, our method employs a monotone line-search, as preliminary experiments indicated that a non-monotone strategy did not yield better performance.

For completeness, we note that, instead of always initializing the line-search with $\eta = 1$ in line \ref{line:eta0}, we allow smaller initial values when the step size $\|d_k\|$ is significantly larger than in previous iterations. This safeguard was introduced because preliminary tests showed that overly large initial steps can substantially increase the number of function evaluations and retractions required by the line-search.

Each of the 75 problems was solved starting from 10 random initial points $x_0$, for a total of 750 instances. All solvers were initialized from the same $x_0$ and terminated when the norm of the Riemannian gradient was reduced by a factor of $10^{-6}$, that is, when \texttt{tolgradnorm}$ = 10^{-6}\|\grad f(x_0)\|$. Additional stopping criteria were \texttt{maxiter}$ = 5\cdot 10^{4}$, \texttt{maxtime}$ = 600$ seconds, and \texttt{minstepsize}$ = 10^{-10}$. A solver was considered to have failed on a given instance if it terminated due to any of these additional criteria. For each solver, averages over the 10 runs were computed for CPU time, iterations, function evaluations, gradient evaluations, and number of failures; detailed results are reported in the GitHub repository.

We also report performance profiles \cite{performance}. For a solver $S$ and a problem instance $P$, let $t_P^S$ denote the computational cost of solver $S$ on $P$ with respect to a chosen metric (e.g., CPU time), with $t_P^S = \infty$ if $S$ fails on~$P$. Let $t_P = \min_S t_P^S$. For $\tau \ge 1$, the performance profile of solver $S$ is defined as
\[
\pi_S(\tau) = \frac{\#\{\text{instances } P : t_P^S / t_P \le \tau\}}{\#\{\text{instances } P\}}.
\]
Here, $\pi_S(1)$ represents the fraction of instances for which $S$ is the best solver, while $\pi_S(\tau)$ for larger values of $\tau$ measures how often $S$ performs within a factor $\tau$ of the best solver.

\begin{table}
\tbl{Percentages of problems solved in the best CPU-time ($\pi_S(1)$), and successfully solved  ($\pi_S(\infty)$).}
{
\begin{tabular}{lrr} 
\toprule
\textbf{Solver} & $\pi_S(1)$ & $\pi_S(\infty)$ \\
\midrule
\texttt{RGMM} & $33.4\%$ & $98.1\%$ \\
\texttt{RBB} & $28.9\%$ & $98.5\%$ \\
\texttt{RCG} & $13.1\%$ & $95.3\%$ \\
\texttt{RTR} & $16.7\%$ & $100.0\%$ \\
\texttt{RLBFG} & $7.9\%$ & $98.8\%$ \\
\bottomrule
\end{tabular}
}
\label{tab:perfprof}
\end{table}

Figure \ref{fig:perf_time} shows the CPU-time performance profile for $\tau \in [1,10^2]$. As reported in Table \ref{tab:perfprof}, \texttt{RGMM} is the best (i.e., has the largest $\pi_S(1)$) on the majority of instances. Moreover, for $\tau$ between $1$ and $8$, \texttt{RGMM} attains the highest profile, indicating superior robustness across a wide range of tolerances. For $\tau \ge 10^2$ the performance profile shows constant values, which stand for the numbers of instances solved by the respective solvers, reported in Table \ref{tab:perfprof}.
Specifically,
\begin{itemize}
\item \texttt{RGMM} failed on all instances for one problem out of 75 (GEC (e)); in addition, it failed on two instances of GEC (c), and two instances of GEC (d);
\item \texttt{RBB} failed on all instances for one problem (RIC (e)); in addition, it failed on one instance of GEC (e);
\item \texttt{RCG} failed on all instances for two problems (GEC (e) and KM (e)); in addition, it failed on eight instances of GEC (c), five instances of GEC (d), one instance of SP (a), and one instance of SP (b);
\item \texttt{RTR} solved all instances of all problems;
\item \texttt{RLBFGS} solved at least one instance of all problems, failing on one instance of GEC (c), eight instances of GEC (e), and one instance of KM (e).
\end{itemize}

Figure \ref{fig:perf_iter_f} reports profiles for iterations and function evaluations for \texttt{RGMM}, \texttt{RBB}, and \texttt{RCG}. \texttt{RGMM} attains the lowest number of iterations in $52.0\%$ of instances, and the lowest number of function evaluations in $49.3\%$ of instances. Moreover, it attains the best performance in terms of both metrics for all values of $\tau$, except those sufficiently large, reflecting the fact that \texttt{RBB} solves slightly more instances than \texttt{RGMM}.

Overall, the experimental results indicate that \texttt{RGMM} is a robust and competitive method. It typically requires the fewest iterations and function evaluations and exhibits strong CPU-time performance across most tested instances. In particular, \texttt{RGMM} is the fastest solver on approximately $33.4\%$ of the instances and achieves the highest performance profile for $\tau \in [1,8]$, highlighting its consistency across a wide range of tolerances. Its failure rate is negligible and comparable to that of the competing solvers.

\begin{figure}
  \centering
  \includegraphics[width=0.8\textwidth]{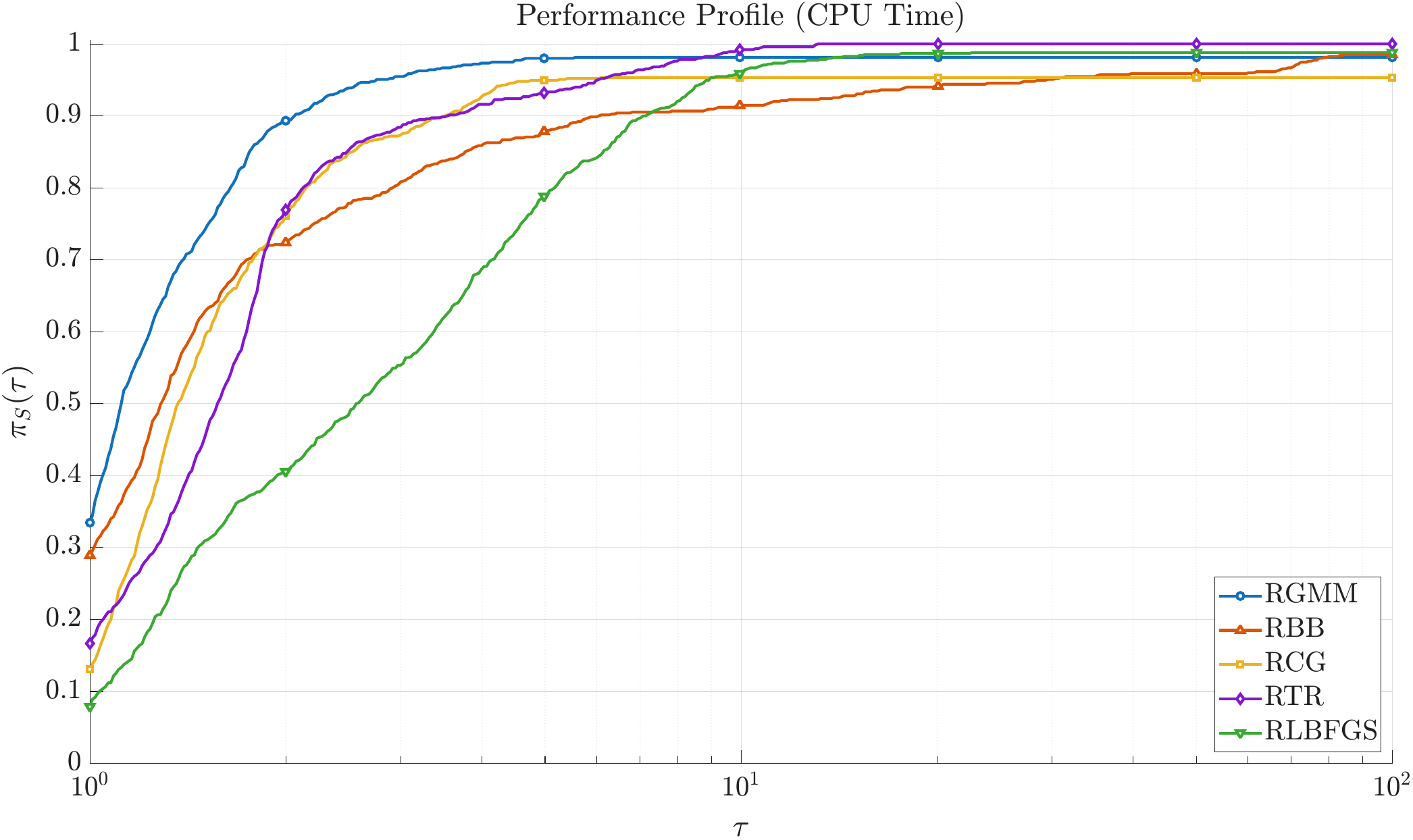}
  \caption{CPU time performance profile.}
  \label{fig:perf_time}
\end{figure}

\begin{figure}
\centering
\subfloat[Iterations performance profile.]{%
\resizebox*{7cm}{!}{\includegraphics{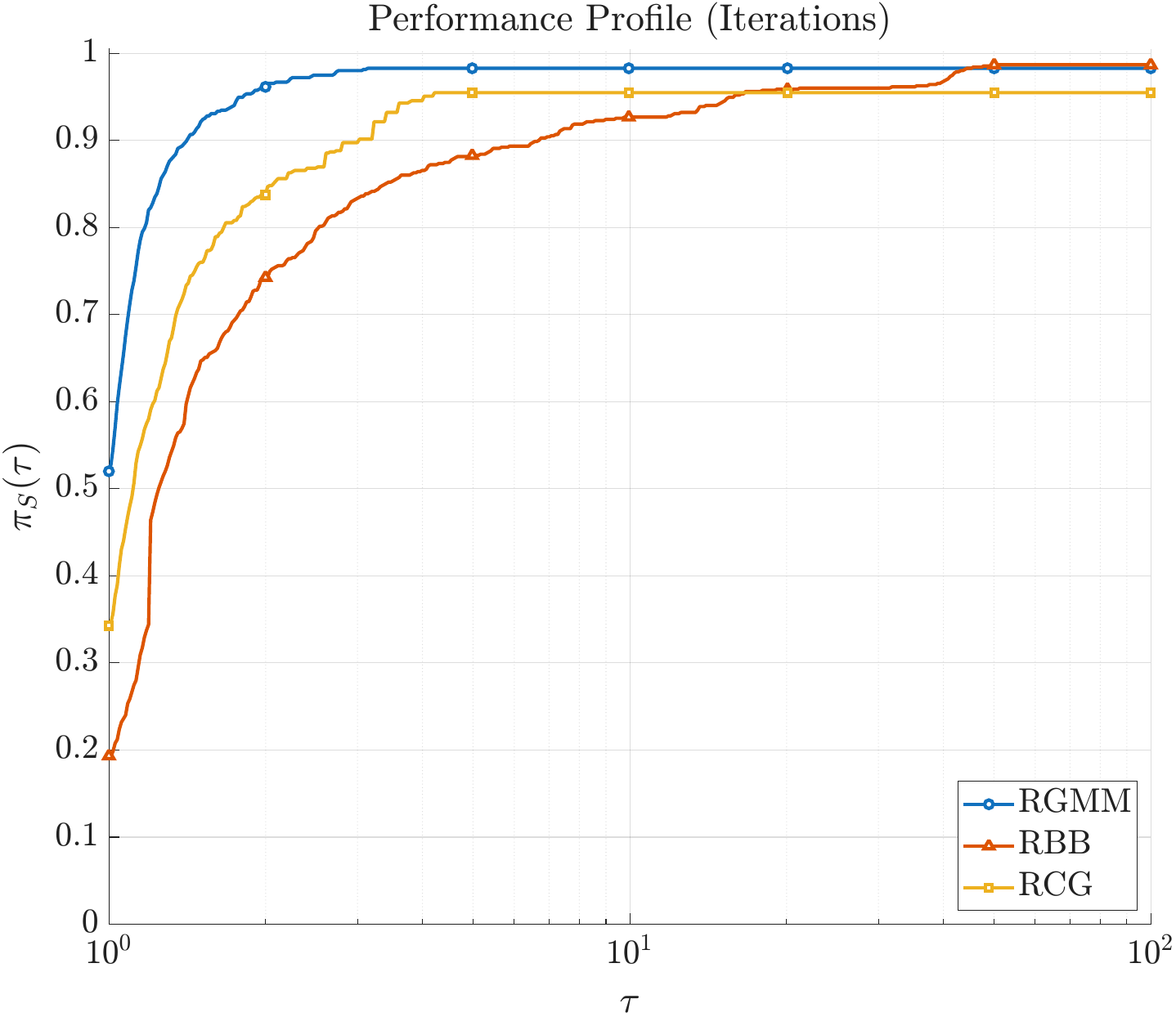}}}\hspace{5pt}
\subfloat[Function evaluations performance profile.]{%
\resizebox*{7cm}{!}{\includegraphics{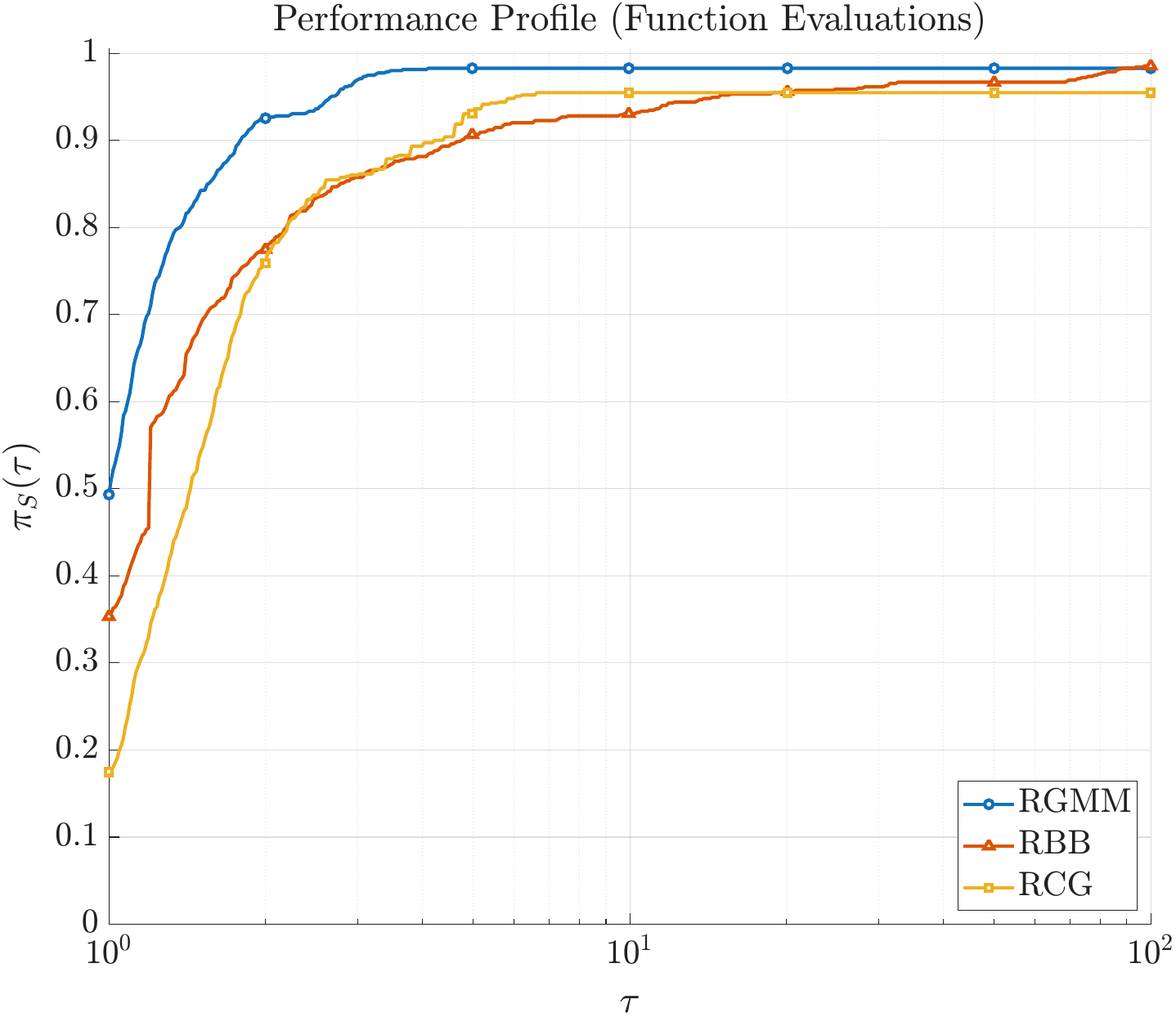}}}
\caption{Iterations and function evaluations performance profile (\texttt{RGMM}, \texttt{RBB}, \texttt{RCG}).}
    \label{fig:perf_iter_f}
\end{figure}

Finally, with $c_1=10^{-9}$ and $c_2=10^9$ the gradient-related conditions were satisfied in all tested runs, except for one iteration in three instances of the problem SP (a). Intuitively, choosing $c_1$ sufficiently small and $c_2$ sufficiently large makes these conditions nearly equivalent to a standard descent requirement, which is always satisfied in our method. Moreover, the curvature condition  $\langle s_k,y_k \rangle>0$ was rarely violated, only during a total of $514$ iterations out of $107 951$  (less than $0.5\%$); therefore, the safeguards described in Section \ref{Sec:newdirection} act mainly as theoretical protections.

\section{Conclusions}
\label{sec:conclusioni}

In this work, we presented a new Riemannian gradient method with momentum, adapting the recent approach developed by Lapucci et al.\ for the unconstrained case \cite{lapucci2025}. We studied global convergence to stationary points, with a worst-case complexity bound of $\mathcal O(\epsilon^{-2})$, under reasonable hypotheses that are milder than those presented in \cite{RegolarizzazioneCubica}. We propose a practical and effective way to choose the operator $B_k$ that does not require explicit knowledge of the Hessian or extra evaluations of gradients or functions and retractions, making this choice effective; however, in the future, other proposals may be investigated. We tested our method on a collection of problems available in the Manopt package, comparing it with existing solvers. Our algorithm was the fastest solver in the majority of problems and the most robust, demonstrating its reliability and making it a reasonable choice for solving optimization problems on Riemannian manifolds.

\section*{Acknowledgments}
The authors wish to express their gratitude to Professor Marco Sciandrone for his valuable suggestions, his careful reading of the manuscript, and his encouragement throughout the development of this work.

\section*{Disclosure statement}
The authors report there are no competing interests to declare.

\section*{Funding}
No funding was received for conducting this study.

\section*{Code Availability Statement}
The code developed for the experimental part of this paper is publicly available at \url{https://github.com/DiegoScuppa/RGMM}.

\bibliographystyle{tfs}
\bibliography{bibliography}

\appendix

\section{Proofs}
\label{app:proofs}

In this appendix, we provide the proof of Proposition \ref{prop:Convergenza}.
First, we prove a key property of the Armijo line-search procedure.

\medskip

\begin{lemma}
\label{lemma:Armijo}
Suppose Assumptions \ref{ass:gradientrelated}-\ref{ass:assunzione} are satisfied (with constants $0< c_1\le c_2$, and $L>0$). Let $\delta\in(0,1)$, $\gamma\in(0,1)$. Then, the Armijo line-search terminates in a finite number of iterations, finding a parameter $\eta_k=\delta^{j_k}\in (0,1]$ for some $j_k\in \mathbb N$. Moreover,
\begin{enumerate}
\item if $L\le2(1-\gamma)\frac{c_1}{c_2^2}$, then $j_k=0$ (i.e., $\eta_k=1$);
\item otherwise,
\begin{equation*}
j_k\le \bigg\lfloor\log_{\delta} \bigg(\frac{2}{L}(1-\gamma)\frac{c_1}{c_2^2}\bigg)\bigg\rfloor +1.
\end{equation*}
\end{enumerate}

\end{lemma}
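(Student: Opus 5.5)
The plan is to show that the Armijo condition holds automatically for every trial step $\eta\in(0,1]$ with $\eta\le \bar\eta:=\frac{2}{L}(1-\gamma)\frac{c_1}{c_2^2}$. Both claims about $j_k$ then follow by counting backtracking steps. Throughout, assume $g_k=\grad f(x_k)\neq 0$, since otherwise the algorithm would already have stopped. Note that $d_k\neq 0$, because \eqref{eqn:gradrelated1} gives $\langle g_k,d_k\rangle<0$.

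The key step is a sufficient-decrease estimate. Fix $\eta\in(0,1]$. Since $\|\eta d_k\|\le\|d_k\|<\rho_k$, the trial point lies in the ball where Assumption~\ref{ass:assunzione} applies, so the retraction is defined there and \eqref{eqn:lipschitz} gives
\[
f(\R_{x_k}(\eta d_k))\le f(x_k)+\eta\langle g_k,d_k\rangle+\frac{L}{2}\eta^2\|d_k\|^2 .
\]
The Armijo condition therefore holds as soon as $\frac{L}{2}\eta^2\|d_k\|^2\le -(1-\gamma)\eta\langle g_k,d_k\rangle$. By \eqref{eqn:gradrelated1} and \eqref{eqn:gradrelated2}, the right-hand side is at least $(1-\gamma)\eta c_1\|g_k\|^2$, and the left-hand side is at most $\frac{L}{2}\eta^2c_2^2\|g_k\|^2$. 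Hence it suffices that $\eta\le\bar\eta$.

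Now count the backtracking steps. Trial steps are $\delta^j$, $j=0,1,\dots$, and the search stops at the first $j$ for which the Armijo test succeeds. If $L\le 2(1-\gamma)c_1/c_2^2$, then $\bar\eta\ge1$, so $\eta=1$ is accepted and $j_k=0$. Otherwise $\bar\eta<1$. Every $j$ with $\delta^j\le\bar\eta$, that is $j\ge\log_\delta\bar\eta$ (recall $\log_\delta$ is decreasing since $\delta<1$), is accepted. The smallest integer satisfying this is at most $\lfloor\log_\delta\bar\eta\rfloor+1$, which bounds $j_k$. This also proves finite termination, and $\eta_k=\delta^{j_k}\in(0,1]$.

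The argument is short, and the only delicate point is the domain issue. One must check that every trial point $\eta d_k$ lies inside $B(\rho_k)$, so that the retraction is defined and \eqref{eqn:lipschitz} can be used. This is exactly why the line search starts at $\eta=1$ and only decreases, combined with the requirement $\rho_k>\|d_k\|$.
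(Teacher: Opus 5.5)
Your proposal is correct and follows essentially the same approach as the paper. The paper shows that every rejected trial $j$ must satisfy $\delta^j>\frac{2}{L}(1-\gamma)\frac{c_1}{c_2^2}$, which is the contrapositive of your claim that every trial step $\eta\le\bar\eta$ is accepted, and the counting of backtracking steps is the same; your explicit check that $\|\eta d_k\|\le\|d_k\|<\rho_k$ spells out a domain condition the paper uses without comment.
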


\medskip

\begin{proof}
From Assumption \ref{ass:assunzione}, for all $k$ and all $j\in \mathbb N$,
\begin{equation}
\label{eqn:lemmaArmijo1}
f(\R_{x_k}(\delta^jd_k)) \le f(x_k) + \delta^j\langle \grad f(x_k), d_k \rangle + \frac{L}{2}\delta^{2j}\|d_k\|^2.
\end{equation}
For all $k$, define the (possibly empty) set of the unsuccessful trials in the Armijo line-search:
\begin{equation*}
J_k:=\big\{j\in\mathbb N \text{ such that } f(\R_{x_k}(\delta^jd_k)) > f(x_k) + \gamma\delta^j\langle \grad f(x_k), d_k \rangle\big\}.
\end{equation*}
Then, using also \eqref{eqn:lemmaArmijo1}, for all $j \in J_k$, we have
\begin{equation}
\label{eqn:lemmaArmijo2}
(\gamma-1) \delta^j \langle \grad f(x_k), d_k \rangle < \frac{L}{2}\delta^{2j}\|d_k\|^2.
\end{equation}
Hence, from \eqref{eqn:gradrelated} and \eqref{eqn:lemmaArmijo2}, we have:
\begin{equation*}
(1-\gamma) c_1 \|g_k\|^2 < \frac{L}{2}\delta^{j} c_2^2\|g_k\|^2 .
\end{equation*}
In particular,
\begin{equation*}
\delta^j>\frac{2}{L}(1-\gamma)\frac{c_1}{c_2^2}.
\end{equation*}
If $L\le2(1-\gamma)\frac{c_1}{c_2^2}$, we have $\delta^j > 1$, i.e., $j<0$. Thus, the set $J_k$ is empty. As a consequence, the Armijo condition is satisfied with $\eta_k=\delta^0=1$.
Otherwise, every $j\in J_k$ must satisfy $0\le j < \log_{\delta} \frac{2}{L} (1-\gamma)\frac{c_1}{c_2^2}$. Therefore, the Armijo condition is satisfied with $\eta_k=\delta^{j_k}$, where $j_k$ is at most $\lfloor \log_{\delta} (\frac{2}{L} (1-\gamma)\frac{c_1}{c_2^2})\rfloor +1$.
\end{proof}

\medskip

\begin{proof}[Proof of Proposition \ref{prop:Convergenza}]
Let $\epsilon>0$. Exploiting \eqref{eqn:gradrelated1}, Assumption \ref{ass:flow}, and the Armijo sufficient reduction, for all $K\in \mathbb N$, we have:
\begin{equation}
\label{eqn:Convergenza1}
\begin{split}
\sum_{k=0}^{K-1}\|\grad f(x_k)\|^2 \le \sum_{k=0}^{K-1}\big(-\frac{1}{c_1}\langle \grad f(x_k),d_k\rangle\big) \le 
\\ \le\sum_{k=0}^{K-1} 
\frac{1}{\gamma c_1 \eta_k} \big(f(x_k)-f(x_{k+1})\big) \le \frac{1}{\gamma c_1 \tilde{\eta}}\big(f(x_0)-f_{low}\big),
\end{split}
\end{equation}
where $\tilde \eta$ is a quantity such that $0<\tilde \eta \le \eta_k$ for all $k$ (it exists from Lemma \ref{lemma:Armijo}). Then, taking $K=k_{\epsilon}$, we have that $\|\grad f(x_k)\|>\epsilon$ for all $k=0,\dots,k_{\epsilon}-1$; hence:
\begin{equation}
\label{eqn:Convergenza2}
k_{\epsilon}\cdot\epsilon^2  <\sum_{k=0}^{k_{\epsilon}-1}\|\grad f(x_k)\|^2.
\end{equation}
From \eqref{eqn:Convergenza1} and \eqref{eqn:Convergenza2}, it follows that:
\begin{equation*}
        k_\epsilon\le \frac {\big(	f(x_0) - f_{low}\big)}{ \gamma c_1\tilde{\eta}}\, \epsilon^{-2}.
    \end{equation*}
From Lemma \ref{lemma:Armijo}, if $L\le2(1-\gamma)\frac{c_1}{c_2^2}$, then $\eta_k=1$ for all $k$; hence, $\tilde \eta$ can be chosen as $1$. Moreover, for all iterations, only one function evaluation and one retraction are performed. Hence, \eqref{eqn:caso1} follows.
Otherwise, again from Lemma \ref{lemma:Armijo}, 
\begin{equation}
\label{eqn:propArmijo1}
\eta_k\ge2\frac{1-\gamma}{L}\frac{c_1}{c_2^2}\delta,
\end{equation}
then, we can take $\tilde \eta$ as the RHS of \eqref{eqn:propArmijo1}. Then, \eqref{eqn:caso2a} follows. Moreover, at every iteration, at most 
\begin{equation*}
\bigg\lfloor\log_{\delta} \bigg(\frac{2}{L}(1-\gamma)\frac{c_1}{c_2^2}\bigg)\bigg\rfloor +2.
\end{equation*}
function evaluations and retractions are performed. Hence, \eqref{eqn:caso2b} follows.
Moreover, the inequalities in \eqref{eqn:Convergenza1} are also satisfied taking the limit for $K \to \infty$. Hence, recalling the definition of $ni_{\epsilon}$,
\begin{equation*}
ni_{\epsilon}\cdot\epsilon^2  <\sum_{k : \|\grad f(x_k)\|>\epsilon}\|\grad f(x_k)\|^2 \le \sum_{k=0}^{\infty}\|\grad f(x_k)\|^2\le \frac{1}{\gamma c_1 \tilde{\eta}}\big(f(x_0)-f_{low}\big).
\end{equation*}
Hence,
\begin{equation*}
        ni_\epsilon\le \frac {\big(	f(x_0) - f_{low}\big)}{ \gamma c_1\tilde{\eta}}\, \epsilon^{-2}.
    \end{equation*}
Thus, \eqref{eqn:caso1} and \eqref{eqn:caso2a} are also satisfied for $ni_{\epsilon}$.
\end{proof}

\end{document}